\title{A higher-order genus invariant and knot Floer homology}
\author{Peter D. Horn}
\address{Department of Mathematics\\Rice University\\6100 S. Main Street\\Houston, TX 77005 }
\email{phorn@rice.edu}
\urladdr{http://math.rice.edu/~phorn}
\newtheorem{thm}{Theorem}    
\newtheorem{prop}{Proposition}
\theoremstyle{definition}
\newtheorem{defn}{Definition}    
\newtheorem{exam}{Example}
\newtheorem*{rem}{Remark}             
\newcommand{\inv}{^{-1}}
\newcommand{\p}{\pi_1}
\newcommand{\del}{\partial}
\renewcommand{\a}{\alpha}
\renewcommand{\b}{\beta}
\newcommand{\Gn}[1]{G^{(#1)}}
\newcommand{\Z}{{\mathbb{Z}}}
\newcommand{\Q}{{\mathbb{Q}}}
\newcommand{\N}{{\mathbb{N}}}
\newcommand{\dg}{{\mathrm{dg}}}
\begin{document}

\begin{abstract}
	It is known that knot Floer homology detects the genus and Alexander polynomial of a knot.  We investigate whether knot Floer homology of $K$ detects more structure of minimal genus Seifert surfaces for $K$.  We define an invariant of algebraically slice, genus one knots and provide examples to show that knot Floer homology does not detect this invariant.  Finally, we remark that certain metabelian $L^2$-signatures bound this invariant from below.
\end{abstract}

\maketitle

\section{Introduction}

	Knot Floer homology was defined by Peter Ozsv\'{a}th and Zolt\'{a}n Szab\'{o}~\cite{OS04b} and by Jacob Rasmussen~\cite{jR03}.  Knot Floer homology is a powerful knot invariant, and it detects such information as the Alexander polynomial~\cite{OS04b}\cite{jR03} and knot genus~\cite[Theorem 1.2]{OS04}.  Either of these invariants can be computed from a minimal genus Seifert surface.  We investigate whether knot Floer homology contains more information about any minimal genus Seifert surface.
	
	In~\cite{pH09}, the author defined a geometric invariant for knots in $S^{3}$ called the \textbf{first-order genus}.  Roughly, the first-order genus of $K$ is obtained by adding the individual genera (in $S^{3}-K$) of the curves in a symplectic basis on a minimal genus Seifert surface for $K$, and taking the minimum over all minimal genus Seifert surfaces.  The first-order genus of a knot is difficult to compute, as there are many symplectic bases for a given Seifert surface.  While difficult to compute in general, the first-order genus is a notion of higher-order genus defined for all knots.
	
	In this paper, we define a similar invariant, though it is only defined for algebraically slice, genus one knots.  We take a minimum over Seifert surfaces, but what we record is the genus (in $S^{3}-K$) of a basis curve which inherits the zero framing from the surface.  We will provide many examples and show that this invariant is not detected by knot Floer homology.
	
\begin{thm}\label{hfkfails}
	Knot Floer homology does not detect the knottedness of untwisted bands in a Seifert surface.
\end{thm}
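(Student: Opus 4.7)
The plan is to exhibit two algebraically slice, genus-one knots $K_{0}$ and $K_{1}$ with isomorphic $\widehat{HFK}$ but distinct band-knottedness invariants. Matched knot Floer homology is easiest to arrange by restricting to alternating knots, because Ozsv\'{a}th--Szab\'{o} showed that $\widehat{HFK}$ of an alternating knot is determined by its Alexander polynomial and signature. For an algebraically slice genus-one knot the signature vanishes (as a Witt invariant of the Seifert form) and the Alexander polynomial has the form $at^{2}-(2a+1)t+a$ for some $a\in\mathbb{Z}$. Hence any two alternating, algebraically slice, genus-one knots with the same integer $a$ automatically have identical $\widehat{HFK}$, and it suffices to find two such knots whose band-knottedness invariants differ.

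Concretely I would fix a small value of $a$ and compare a ``base'' alternating knot that admits a genus-one Seifert surface on which the zero-framed basis curve is visibly unknotted in $S^{3}$ (so the invariant is $0$) with a second alternating knot of the same Alexander polynomial whose Seifert surfaces all have a knotted zero-framed metabolizer. Candidates for the base include Stevedore's knot and the untwisted Whitehead double of the unknot, both of which have an obvious disk-band presentation in which the metabolizing curve is unknotted. For the companion with larger invariant, I would either identify an alternating pretzel or two-bridge knot with the right Alexander polynomial, or produce one through a mild modification that is carefully chosen to preserve both alternation and the Seifert form.

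The nontrivial step is the lower bound for the second knot. Upper bounds on the band-knottedness are easy---simply exhibit a Seifert surface and compute the Seifert genus of the metabolizing basis curve in $S^{3}-K$---but the lower bound must rule out every minimal-genus Seifert surface and every metabolizing basis curve simultaneously. This is precisely the role of the metabelian $L^{2}$-signature obstruction alluded to in the abstract: such signatures are invariants of $K$, independent of the chosen Seifert surface, and are known to grow with the complexity of infecting curves. Establishing this lower bound and matching it against the upper bound from the explicit Seifert surface constitutes the technical heart of the argument; once the two values of the invariant are known to differ while $\widehat{HFK}$ agrees, Theorem~\ref{hfkfails} follows.
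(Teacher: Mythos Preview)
Your proposal is a strategy sketch, not a proof, and it contains a genuine misunderstanding of the invariant. The differential genus $\dg(K)$ is defined using $g^{K}(\alpha)$, the genus of the derivative \emph{in the complement $S^{3}-K$}, not in $S^{3}$. Thus ``the metabolizing curve is visibly unknotted in $S^{3}$'' does not give $\dg=0$; for $9_{46}$ both derivatives are unknotted in $S^{3}$ yet $\dg(9_{46})=1$. Beyond this, you never actually produce the second knot: ``I would either identify an alternating pretzel or two-bridge knot \dots\ or produce one through a mild modification'' is not a construction, and it is not clear that any alternating, algebraically slice, genus-one knot with the required Alexander polynomial and a provably larger $\dg$ exists. (Your suggested base ``untwisted Whitehead double of the unknot'' is the unknot.) Finally, the $L^{2}$-signature bound you invoke is $2\,\dg(K)\geq|\rho_i|$, so to separate two knots you must compute these signatures for both; this is left entirely undone.

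The paper takes a different and more direct route. It builds an explicit family $K_{n}$ with $K_{0}=9_{46}$ (these are \emph{not} alternating), and uses a result of Hedden to show $\widehat{HFK}(K_{n})\cong\widehat{HFK}(9_{46})$ for all $n$. The lower bound on $\dg(K_{n})$ does \emph{not} come from $L^{2}$-signatures: since $\operatorname{rank}\widehat{HFK}(K_{n},1)=2<4$, Juh\'asz's theorem forces the genus-one Seifert surface to be unique up to isotopy, so one only has to analyze the two derivatives on that single surface. One of them, $\alpha_{n}$, is exhibited as the closure of an explicit positive braid on $n+1$ strands, and Cromwell's theorem gives $g(\alpha_{n})=n$, hence $\dg(K_{n})\geq n$. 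The $L^{2}$-signature inequality appears in the paper only as a separate remark, not as the mechanism behind Theorem~\ref{hfkfails}.
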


	Cochran, Harvey and Leidy~\cite{CHL08} defined the first-order $L^2$-signatures of a knot.  By their definition, each algebraically slice, genus one knot has (at most) three first-order $L^2$-signatures.  We will discuss the relationship between our higher-order genus invariant and these first-order $L^2$-signatures.

\section{Motivation and definition}
	
	Let $\Sigma_{g}$ be a compact, oriented surface with one boundary component.  If $f:\Sigma_{g}\hookrightarrow S^{3}$ is an embedding with $K=f(\del \Sigma_{g})$, then some invariants of $K$ can be computed using this embedded surface $f(\Sigma_{g})$.  Such an surface is called a \textbf{Seifert surface} for $K$.  For example, any Seifert surface can be used to compute the knot's Alexander polynomial.  This polynomial is encoded in the knot Floer homology $\widehat{HFK}(K)$.  The smallest genus of such embedded surfaces with boundary $K$ is called the genus of $K$, $g(K)$, and this invariant is also detected by $\widehat{HFK}(K)$.  We na\"{i}vely ask whether $\widehat{HFK}(K)$ contains anymore information about the embedded surfaces with boundary $K$.  For example, we are interested in whether $\widehat{HFK}(K)$ contains information about the knottedness of certain simple closed curves on Seifert surfaces $f(\Sigma_{g})$ for $K$.  In this paper we will restrict our attention to genus one, algebraically slice knots.
	
	Our motivating example is the positively-clasped, untwisted Whitehead double of a knot $K$, denoted $D(K)$ and depicted in Figure~\ref{whd}.  There is an obvious genus one Seifert surface for $D(K)$.  In~\cite{pH09}, the author defined a knot invariant that measures the knottedness of the bands in Seifert surface for a knot.  For many knots $K$, this invariant applied to $D(K)$ `detects' the genus of $K$, i.e. $g_{1}(D(K))=1+g(K)$.  One may ask if $\widehat{HFK}(D(K))$ detects $g_{1}(D(K))\approx g(K)$, and by Hedden's formula~\cite[Theorem 1.2]{mH07}, the answer is `yes' in the sense that $\widehat{HFK}(D(K),1)$ has as a direct summand $\bigoplus_{j=-g(K)}^{g(K)} G_{j}(K)$, where the $G_{j}(K)$ are certain groups depending on $K$.  Due to computational difficulties, it is unknown whether $\widehat{HFK}(K)$ detects $g_{1}(K)$ in general.  We aim to define an invariant that is more computable than $g_{1}$ and which measures, more or less, the same thing.

\begin{figure}[ht]
	\begin{center}
	\begin{picture}(195, 80)(0,0)
		\put(93, 25){$\to$}
		\put(118, 28){\small $K$}
		\includegraphics{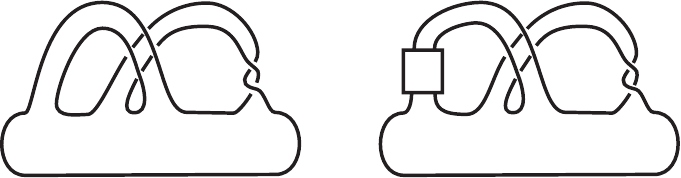}
	\end{picture}
	\caption{$D(K)$: the positively-clasped, untwisted Whitehead double of $K$}
	\label{whd}
	\end{center}
\end{figure}

\begin{defn}\label{met}
	Let $K$ be an algebraically slice knot in $S^{3}$ of genus one.  Let $\Sigma$ be any genus one Seifert surface for $K$.  Then $\Sigma$ has a \textbf{metabolizer} $\mathfrak{m}$, a rank one submodule of $H_{1}(\Sigma;\mathbb{Z})$ on which the Seifert form vanishes.  One can show that $\Sigma$ has exactly two metabolizers $\mathfrak{m}_{1}$ and $\mathfrak{m}_{2}$.  Let $[\a_{1}]$ and $[\a_{2}]\in H_{1}(\Sigma;\Z)$ be generators of $\mathfrak{m}_{1}$ and $\mathfrak{m}_{2}$, respectively.  By the classification of essential closed curves on a punctured torus~\cite{yM99}, each $[\a_{i}]$ determines a unique oriented knot in $\Sigma$; denote this knot by $\a_{i}$.  The knot $\a_{i}$ is called a \textbf{derivative of $K$ with respect to the metabolizer $\mathfrak{m}_{i}$}.
	
	To sum up, each genus one Seifert surface $\Sigma$ for an algebraically slice knot $K$ has exactly two (up to orientation) derivatives $\a_{1}$ and $\a_{2}$.  We denote this set of derivatives as $\partial\left(K,\Sigma\right)=\{\a_{1},\a_{2}\}$.
	
	Let $\mathcal{G}(K)$ denote the set of isotopy classes (in $S^{3}-K$) of genus one Seifert surfaces for $K$, and if $\a$ is a null-homologous knot in $S^{3}-K$, let $g^{K}(\a)$ denote the genus of $\a$ in $S^{3}-K$.  We define the \textbf{differential genus} of $K$ to be $$\dg(K)=\min_{\Sigma\in\mathcal{G}(K)}\left\{\max_{\partial(K,\Sigma)=\{\a_{1},\a_{2}\}}\left\{g^{K}(\a_{1}),g^{K}(\a_{2})\right\}\right\}$$
\end{defn}

\begin{rem}
		The differential genus measures the knottedness of self-linking zero curves on (genus one) Seifert surfaces for $K$.  One may define metabolizers and derivatives of algebraically slice knots of higher genus (see~\cite{CHL08}), but in the higher genus setting, a metabolizer may have infinitely many distinct derivatives.  One may try to generalize the definition of differential genus to higher genus algebraically slice knots; this may be taken up in a future paper.
\end{rem}

\section{Examples}

\begin{exam}
	Let $K$ be a knot that is non-trivial and not a cable.  By~\cite{wW73}, the untwisted Whitehead double of $K$, $D(K)$, has a unique minimal genus Seifert surface.  Each of the untwisted curves on this Seifert surface have the same knot type as $K$.  One can further argue that $\dg(D(K))=g(K)$.
\end{exam}

\begin{exam}
	Let $R=9_{46}$ as depicted in Figure~\ref{946}.  A symplectic basis of curves $\a$ and $\b$ have been drawn for the implied Seifert surface $\Sigma$.  One can check that $\a$ and $\b$ have self-linking zero, and so the two derivatives for $\Sigma$ are $\a$ and $\b$.  Each of $\a$ and $\b$ is unknotted, however $g^{R}(\a)=g^{R}(\b)=1$.  The knot Floer homology of $R$ is
	
	$$\begin{array}{ccc} 0&0&2\\0&5&0\\2&0&0\end{array}$$ where the $5$ appears in bigrading $(0,0)$.  The genus of $R$ is one, and since $\textrm{rank}\,\widehat{HFK}(R,1)=2<4$, we may apply Theorem 2.3 of~\cite{aJ08} to conclude that $\Sigma$ is the unique genus one Seifert surface for $R$ up to isotopy.  We conclude that $\dg(R)=1$.
	
		\begin{figure}[ht!]
		\begin{center}
			\begin{picture}(130, 140)(0,0)
			\includegraphics[scale=1.3]{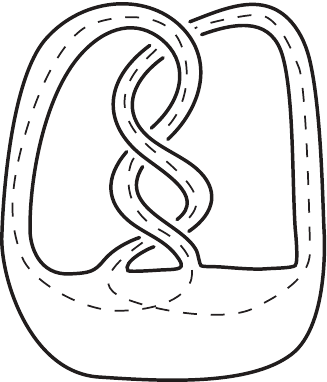}
			\put(-90, 20){$\a$}
			\put(-81, 24){$<$}
			\put(-50, 15){$\b$}
			\put(-52, 23){$>$}
			\end{picture}
			\caption{The $9_{46}$ knot}
			\label{946}
		\end{center}
	\end{figure}
	
\end{exam}

\begin{exam}\label{family}

	Now consider the knot $K_{n}$ in Figure~\ref{kn}, where $n\in\N$.  Observe that $K_{0}=9_{46}$.  A symplectic basis of curves $x$ and $y$ have been drawn for the implied Seifert surface $F$.  One can check the Seifert form of $F$ to be $$\left(\begin{array}{cc}3n&-2\\-1&0\end{array}\right)$$  The two curves of self-linking zero are $\a_{n}=x+ny$ and $\b_{n}=y$.  As in the calculation for $9_{46}$, one can check that $g^{K_{n}}(\b_{n})=1$.  The other curve $\a_{n}$ is more complicated; see Figure~\ref{alphan}.  The knot $\a_{n}$ can be represented by the braid on $n+1$ strands depicted in Figure~\ref{braid}.
	
\begin{figure}[ht!]
\begin{center}
\begin{picture}(280, 145)(0, -5)
{\begin{picture}(130, 140)(-75, 0)
			\includegraphics[scale=1.3]{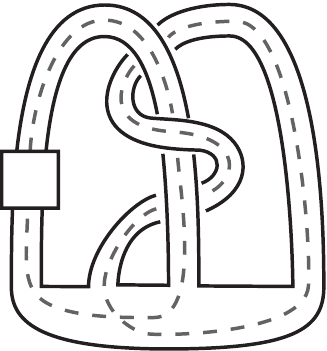}
			\put(-117, 62){$3n$}
			\put(-90, 5){$x$}
			\put(-92, 11){$<$}
			\put(-50, 12){$y$}
			\put(-45, 5){$>$}
			\end{picture}} 

\end{picture}
\end{center}
\caption{A diagram for $K_{n}$, and a basis for a Seifert surface} 
\label{kn} 
\end{figure} 
	
	\begin{figure}[ht!]
		\begin{center}
	
		\begin{picture}(130, 130)(0,0)
			\put(36, 25){$\cdots$}
			\put(104, 25){$\cdots$}
			\put(37, 90){$\ddots$}
			\includegraphics[scale=1.3]{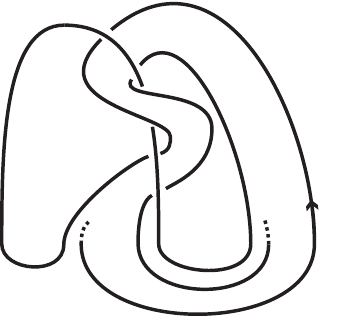}
			\end{picture}\label{alphan}
			\caption{A knot diagram of $\a_{n}$, a curve of self-linking zero}
		\end{center}
	\end{figure}
	
	\begin{figure}[ht!]
		\begin{center}
			\begin{picture}(90, 130)(0,0)
			\put(28, 5){$\cdots$}
			\put(33, 48){$\cdots$}
			\put(38, 91){$\cdots$}
			\includegraphics[scale=1.3]{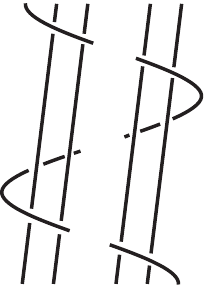}
			\end{picture}\label{braid}
			\caption{A braid representative of $\a_{n}$ using $n+1$ strands}
		\end{center}
	\end{figure}
	
	By~\cite[Corollary 4.1]{pC89}, the Seifert surface constructed by applying Seifert's algorithm to the braid diagram in Figure~\ref{braid} has minimal genus.  In particular $g(\a_{n})=n$, and hence $g^{K_{n}}(\a_{n})\geq n$.  We must argue that $\dg(K_{n})\geq n$.
	For a given $n$, one may easily construct a grid diagram for $K_{n}$.  For several small values of $n$, we used Marc Culler's Gridlink~\cite{mC} to compute the knot Floer homology of $K_{n}$.  We found that $\widehat{HFK}(K_{n})\cong\widehat{HFK}(9_{46})$ for these values of $n$ (although this family is defined for $n\in\N$, we verified the computation for $n=-1,\frac{-2}{3},\frac{-1}{3},0,\frac{1}{3},\frac{2}{3},1$).  A recent result of M. Hedden~\cite{mH08} implies that $\widehat{HFK}(K_{n})\cong\widehat{HFK}(9_{46})$ for all $n$.  By~\cite{aJ08}, $F$ is the unique genus one Seifert surface for $K_{n}$.  By previous arguments, we conclude that $\dg(K_{n})=g^{K_{n}}(\a_{n})\geq g(\a_{n})\geq n$.
\end{exam}

Theorem~\ref{hfkfails} follows.

\begin{rem}
	One can show by calculating the Alexander polynomials of the unknotted curves for $K_{1/3}$ that $\dg\left(K_{1/3}\right)\geq 2$.  Thus, $K_{1/3}$ is an explicit example of a knot with the same knot Floer homology as $9_{46}$ and distinct differential genus.  The knot $K_{1/3}$ is called $11_{n139}$ in the knot tables~\cite{CL09}.
\end{rem}

\begin{thm}
	There exists an infinite family of knots $K_{n}$ such that
	\begin{itemize}
		\item $\widehat{HFK}(K_{n})\cong\widehat{HFK}(K_{m})$ for all $m$ and $n$, and
		\item $\dg(K_{n})\neq \dg(K_{m})$ for $m\neq n$.
	\end{itemize}
\end{thm}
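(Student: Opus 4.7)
The plan is to use the family $K_{n}$ from Example~\ref{family} directly; most of the work has already been done there, so the theorem amounts to packaging those facts and extracting an infinite subfamily. For the first bullet, I would cite Hedden's result~\cite{mH08} to conclude $\widehat{HFK}(K_{n})\cong\widehat{HFK}(9_{46})$ for every $n\in\N$, which immediately gives $\widehat{HFK}(K_{n})\cong\widehat{HFK}(K_{m})$ for all $m,n$. For the differential genus bound, Example~\ref{family} already establishes $\dg(K_{n})\geq n$ for each $n$: the key ingredients are (i) $\textrm{rank}\,\widehat{HFK}(K_{n},1)=2$ combined with Jabuka's~\cite[Theorem 2.3]{aJ08}, which forces $F$ to be the unique genus one Seifert surface for $K_{n}$ up to isotopy and hence forces $\partial(K_{n},F)=\{\a_{n},\b_{n}\}$; and (ii) Cromwell's~\cite[Corollary 4.1]{pC89} applied to the positive braid in Figure~\ref{braid}, which yields $g(\a_{n})=n$ and therefore $g^{K_{n}}(\a_{n})\geq n$.

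With the bound $\dg(K_{n})\geq n$ in hand, the second bullet is a pigeonhole extraction. I would build an infinite subsequence $n_{1}<n_{2}<\cdots$ inductively by setting $n_{1}=1$ and, having chosen $n_{1},\ldots,n_{k}$, selecting $n_{k+1}$ strictly larger than $\max\{\dg(K_{n_{j}}):1\leq j\leq k\}$. Then $\dg(K_{n_{k+1}})\geq n_{k+1}>\dg(K_{n_{j}})$ for each $j\leq k$, so the values $\dg(K_{n_{k}})$ are pairwise distinct. Relabeling the subsequence produces the desired infinite family satisfying both bullets.

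The main obstacle is not in the argument above but in the inputs already cited in Example~\ref{family}, namely Hedden's theorem ensuring the knot Floer homology is constant along the family and the braid-theoretic computation of $g(\a_{n})$. Both are invoked as black boxes; once they are granted, the theorem is a short bookkeeping consequence, together with Jabuka's uniqueness result to ensure that $\a_{n}$ really occurs as a derivative from \emph{every} minimal genus Seifert surface and thus constrains $\dg(K_{n})$ from below.
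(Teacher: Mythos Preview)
Your proposal is correct and follows the same approach as the paper, which simply states that the family is obtained by taking a subsequence of the $K_{n}$ from Example~\ref{family}. You have merely made explicit what the paper leaves implicit: since only the lower bound $\dg(K_{n})\geq n$ is established (not the exact value), one must inductively extract a subsequence to guarantee pairwise distinct differential genera, and your pigeonhole construction does exactly that.
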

\begin{proof}
	The family is constructed by taking a subsequence of the knots $K_{n}$ from Example~\ref{family}.
\end{proof}

\section{First-order $L^2$-signatures and the differential genus}

Metabelian signatures of knots have been defined by Casson-Gordon, Letsche, Cochran-Orr-Teichner, Friedl, and Cochran-Harvey-Leidy~\cite{CG78}\cite{CG86}\cite{cL00}\cite{COT03}\cite{sF04}\cite{CHL08}.  We are interested in those of Cochran, Harvey, and Leidy because each genus one, algebraically slice knot has two ``first-order $L^2$-signatures.''  We now recall some of the background needed to define these signatures.

Suppose $K$ is an oriented knot in $S^{3}$, $M_{K}$ denote the closed, oriented $3$-manifold obtained by zero-surgery on $K$, and $G=\p(M_{K})$.  Let $\Gn{1}$ denote the commutator subgroup of $G$ and $\Gn{2}$ the commutator subgroup of $\Gn{1}$.  The \textbf{classical rational Alexander module} of $K$ is $$\mathcal{A}_{0}(K):= \frac{\Gn{1}}{\Gn{2}} \bigotimes_{\Z\left[t,t\inv\right]} \Q\left[t,t\inv\right]$$  Here $\Gn{1}/\Gn{2}$ is identified with the classical Alexander module $H_{1}\left(M_{K};\Z\left[t,t\inv\right]\right)$.  The \textbf{Blanchfield pairing} of $K$ $$\mathcal{B\ell}_{0}^{K}:\mathcal{A}_{0}(K)\times \mathcal{A}_{0}(K)\to\Q(t)/\Z\left[t,t\inv\right]$$ is defined by $$\mathcal{B\ell}_{0}^{K}(x,y)=\sum_{n\in\Z}\frac{(d\cdot yt^{n})t^{n}}{\Delta_{K}(t)}$$ where $\Delta_{K}(t)$ is the Alexander polynomial of $K$ and $d$ is a $2$-chain with $\del d=\Delta_{K}(t)\cdot x$.  We say a submodule $P\subset \mathcal{A}_{0}(K)$ is \textbf{Lagrangian} (respectively \textbf{isotropic}) if $P=P^{\perp}$ (respectively $P\subset P^{\perp}$) with respect to the Blanchfield pairing.  To a submodule $P\subset \mathcal{A}_{0}(K)$, we can associate a metabelian quotient $\phi_{P}:G\to G/\widetilde P$ by setting $\widetilde P=\ker\left(\Gn{1}\to\Gn{1}/\Gn{2}\to\mathcal{A}_{0}(K)\to\mathcal{A}_{0}(K)/P\right)$.  To this quotient we can associate a real number, called the Cheeger-Gromov von Neumann $\rho$-invariant, $\rho\left(M_{K},\phi_{P}\right)$~\cite{CG85} (see Chapter~\ref{hos} for a description).

\begin{defn}
	The \textbf{first-order $L^2$-signatures of a knot $K$} are the real numbers $\rho\left(M_{K},\phi_{P}\right)$ where $P$ is a Lagrangian submodule of $\mathcal{A}_{0}(K)$ with respect to $\mathcal{B\ell}_{0}^{K}$.
\end{defn}

\begin{rem}
	These are a subset of the metabelian $L^2$-signatures of Cochran, Harvey, and Leidy~\cite[Definition 4.1]{CHL08}, who allow for $P$ to be isotropic.
\end{rem}

Assume $K$ is a genus one, algebraically slice knot with a Seifert surface $\Sigma$.  The reader will recall that $H_{1}(\Sigma;\Z)$ generates $\mathcal{A}_{0}(K)$ as a $\Q\left[t,t\inv\right]$-module (one must pick a lift of $\Sigma$ to the infinite cyclic cover).  If $\Delta_{K}(t)=1$, then $\mathcal{A}_{0}(K)=0$ has no Lagrangian submodules.  On the other hand, if $\Delta_{K}(t)\neq 1$, then $\Delta_{K}(t)=f(t)f(t\inv)$ for some linear polynomial $f(t)$.  $\mathcal{A}_{0}(K)$ must be isomorphic to $\frac{\Q\left[t,t\inv\right]}{\langle f(t)f(t\inv)\rangle}$.  Thus, any proper submodule $P$ must be $$\frac{\Q\left[t,t\inv\right]}{\langle f(t)\rangle}\hspace{1cm}or\hspace{1cm}\frac{\Q\left[t,t\inv\right]}{\langle f(t\inv)\rangle}$$  Since the Blanchfield pairing is primitive, $\mathcal{A}_{0}(K)$ will have precisely two Lagrangians.
By the Definitions~\ref{met}, $K$ will have precisely two Lagrangians and hence two first-order $L^2$-signatures.

\begin{defn}\label{rep}
	Suppose $P\subset \mathcal{A}_{0}(K)$ is a Lagrangian.  The metabolizer $\mathfrak{m}$ \textbf{represents} $P$ if the image of $\mathfrak{m}$ under the map $$i_{\ast}\circ (\mathrm{id}\otimes 1):H_{1}(\Sigma;\Z)\hookrightarrow H_{1}(\Sigma;\Z)\otimes\Q\twoheadrightarrow \mathcal{A}_{0}(K)$$ spans $P$ as a $\Q$-vector space.  (To define $i_{\ast}$, it is necessary to choose a lift of $\Sigma$ to the infinite cyclic cover, but this definition is independent of the choice).
\end{defn}

\begin{prop}[Lemma 5.5 of~\cite{CHL08}]\label{CHLprop}
	Let $K$ be an algebraically slice knot and $P$ be a Lagrangian of $\mathcal{A}_{0}(K)$.  If $\Sigma$ is any Seifert surface for $K$, then some metabolizer of $H_{1}(\Sigma)$ represents $P$.
\end{prop}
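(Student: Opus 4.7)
The plan is to reverse-engineer a metabolizer directly from $P$ using the Seifert-matrix presentation of $\mathcal{A}_{0}(K)$ together with the standard identification of the Blanchfield form in terms of the Seifert form. Fix a basis of $H_{1}(\Sigma;\Z)$ with Seifert matrix $V$, so that $\mathcal{A}_{0}(K)\cong\Q[t,t\inv]^{2g}/(tV-V^{T})\Q[t,t\inv]^{2g}$, and under this presentation the composition of Definition~\ref{rep} sends the $i$-th basis vector to the class $[e_{i}]$.

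My candidate is $\mathfrak{m}_{0}:=\{x\in H_{1}(\Sigma;\Z): i_{*}(x)\in P\}$. First I would verify that $V$ vanishes on $\mathfrak{m}_{0}$. The Blanchfield form admits a formula of the shape $\mathcal{B\ell}_{0}^{K}(i_{*}(x),i_{*}(y))\equiv (1-t)\,x^{T}(tV-V^{T})^{-1}y\pmod{\Z[t,t\inv]}$; substituting $(tV-V^{T})^{-1}=\mathrm{adj}(tV-V^{T})/\Delta_{K}(t)$, the isotropy of $P$ forces $\Delta_{K}(t)$ to divide $(1-t)\,x^{T}\mathrm{adj}(tV-V^{T})y$ in $\Z[t,t\inv]$ for all $x,y\in\mathfrak{m}_{0}$, and a short degree comparison extracts $V(x,y)=0$.

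Next I would perform a dimension count over $\Q$. Because $V-V^{T}$ is the non-singular intersection pairing on $H_{1}(\Sigma;\Q)$ and $V$ vanishes on $\mathfrak{m}_{0}\otimes\Q$, the subspace has $\Q$-dimension at most $g$. For the matching lower bound, use that $\dim_{\Q}P=\tfrac{1}{2}\deg\Delta_{K}(t)$ and that $i_{*}(H_{1}(\Sigma;\Q))$ generates $\mathcal{A}_{0}(K)$ as a $\Q[t,t\inv]$-module; a rank chase yields $\dim_{\Q}(\mathfrak{m}_{0}\otimes\Q)\geq g$. Equality then forces $i_{*}(\mathfrak{m}_{0})\otimes\Q=P$ by a further dimension comparison. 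I would finish by replacing $\mathfrak{m}_{0}$ by its saturation $\mathfrak{m}$ inside $H_{1}(\Sigma;\Z)$, yielding a rank-$g$ direct summand on which $V$ still vanishes by bilinearity; by construction $\mathfrak{m}$ represents $P$.

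The main obstacle will be the lower bound $\dim_{\Q}(\mathfrak{m}_{0}\otimes\Q)\geq g$ when $\deg\Delta_{K}<2g$, where the kernel of $H_{1}(\Sigma;\Q)\to\mathcal{A}_{0}(K)$ must be carefully incorporated into the count and shown to be itself isotropic for both pairings. In the paper's main application ($g=1$, $\Delta_{K}\neq 1$) this map is an isomorphism between two-dimensional $\Q$-vector spaces, the kernel is trivial, and the bound is immediate.
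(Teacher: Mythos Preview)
The paper does not prove this proposition; it is simply quoted as Lemma~5.5 of~\cite{CHL08}, so there is no in-paper argument to compare against.

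Your strategy is the standard one and is salvageable, but there is a concrete error in the set-up that makes the ``short degree comparison'' fail as written. In the presentation $\mathcal{A}_{0}(K)\cong\Q[t,t\inv]^{2g}/(tV-V^{T})$ the tautological generators $[e_{i}]$ correspond to a basis of $H_{1}(S^{3}\setminus\Sigma)$, not of $H_{1}(\Sigma)$; the inclusion of Definition~\ref{rep} is realised by a push-off, so in these coordinates $i_{*}(\gamma)=[V^{T}\gamma]$ (equivalently $[tV\gamma]$), not $[\gamma]$. With the Blanchfield formula you quote and your identification $i_{*}(\gamma)=[\gamma]$, using $\Delta_{K}(1)=\pm 1$ one finds in the $2\times 2$ case that $\gamma^{T}\mathrm{adj}(tV-V^{T})\gamma=(t-1)\,\gamma^{T}\mathrm{adj}(V)\,\gamma$, so the divisibility condition forces $\gamma^{T}\mathrm{adj}(V)\,\gamma=0$, i.e.\ $V(V^{-1}\gamma,V^{-1}\gamma)=0$, \emph{not} $V(\gamma,\gamma)=0$. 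If instead you feed in the correct image $x=V^{T}\gamma$, the same manipulation gives $(\det V)\,\gamma^{T}V\gamma=0$, hence $V(\gamma,\gamma)=0$ when $\det V\neq 0$. So the degree argument works, but only after the twist by $V^{T}$ is accounted for.

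You are right that the lower bound $\dim_{\Q}(\mathfrak{m}_{0}\otimes\Q)\ge g$ is the genuine obstacle in general and requires incorporating the kernel of $i_{*}$. In the paper's actual application ($g=1$, $\Delta_{K}\neq 1$, hence $\det V\neq 0$) the map $i_{*}\otimes\Q$ is an isomorphism of two-dimensional $\Q$-vector spaces, the preimage of a one-dimensional $P$ is one-dimensional, and your sketch (once the push-off correction above is made) is complete.
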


\begin{prop}[Corollary 5.8 of~\cite{CHL08}]\label{keystep}
	Let $K$ be a genus one, algebraically slice knot.  Suppose $P$ is a Lagrangian for $K$, $\Sigma$ is a genus one Seifert surface for $K$, $\mathfrak{m}$ is the metabolizer of $\Sigma$ representing $P$, and $J$ is the derivative with respect to $\mathfrak{m}$. Then the first-order $L^2$-signature of $K$ with respect to $P$ is equal to $\rho_{0}(J)=\int_{S^{1}}\sigma_{\omega}(J)\, d\omega$, the integral of the Levine-Tristram signature function.
\end{prop}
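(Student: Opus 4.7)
The plan is to relate $\rho(M_K,\phi_P)$ to $\rho(M_J,\mathrm{ab})$ by constructing a $4$-manifold cobordism $W$ with $\partial W=M_K\sqcup(-M_J)$, extending $\phi_P$ across $W$, and applying the $L^2$-signature defect theorem of Cheeger-Gromov. Given such an extension $\Phi\colon\pi_1(W)\to G/\widetilde P$ whose restriction to $\pi_1(M_J)$ factors through the abelianization map, the defect formula reads
\[
\rho(M_K,\phi_P)-\rho(M_J,\mathrm{ab})=\sigma^{(2)}(W,\Phi)-\sigma(W),
\]
and one engineers $W$ so that both signatures on the right vanish. The final identification $\rho(M_J,\mathrm{ab})=\int_{S^1}\sigma_\omega(J)\,d\omega$ is a classical computation of the $L^2$ $\rho$-invariant of zero-surgery on a knot with respect to its abelianization.

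To produce $W$, I would push the Seifert surface $\Sigma$ into $B^4$ to obtain a properly embedded genus one surface $\widetilde\Sigma$ bounded by $K$, take the exterior $B^4\setminus\nu(\widetilde\Sigma)$, and then attach a $2$-handle along the curve $J\subset\Sigma$ with the zero framing, which is unambiguous because $J\in\mathfrak{m}$ has vanishing self-linking. The critical topological claim is that this construction produces $M_J$ as the second boundary component; this uses the genus one hypothesis, as the dual basis curve $\eta\subset\Sigma$ meeting $J$ transversely once provides the compressing data to reduce the ``Seifert-surface $\times\,S^1$'' portion of the boundary to zero-surgery on $J\subset S^3$. To extend $\phi_P$, note that because $\mathfrak{m}$ represents $P$ in the sense of Definition~\ref{rep}, the class $[J]\in G^{(1)}/G^{(2)}$ maps into $P\subset\mathcal{A}_0(K)$, so $[J]\in\widetilde P$ and the attaching circle of the $2$-handle lies in $\ker\phi_P$; hence $\phi_P$ extends. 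The factoring through the abelianization on the $M_J$ end is then forced by the fact that the quotient $G/\widetilde P$ has collapsed the submodule of the Alexander module generated by $J$, leaving only the meridional direction in the image of $\pi_1(M_J)$.

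The main obstacle will be the handle-calculus identification of the upper boundary with $M_J$, and the verification that both signatures of $W$ vanish. The ordinary signature $\sigma(W)$ should vanish because the intersection form on $H_2(W)$ is controlled by the self-intersection of $\widetilde\Sigma$, which is zero; the $L^2$-signature $\sigma^{(2)}(W,\Phi)$ should vanish by the parallel computation after checking that $\Phi$ annihilates the relevant $2$-cycle. This outline follows the argument given in Cochran-Harvey-Leidy as their Corollary 5.8, to which I would refer for the detailed handle calculus and for the precise formulation of the signature defect theorem in this setting.
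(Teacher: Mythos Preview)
The paper does not supply a proof of this proposition at all: it is stated with the attribution ``Corollary 5.8 of~\cite{CHL08}'' and no argument is given. Your proposal therefore goes well beyond what the paper does, by sketching the cobordism argument that underlies the cited result. The outline you give---build a $4$-manifold $W$ with $\partial W=M_K\sqcup(-M_J)$, extend $\phi_P$ over $W$ using the fact that $[J]\in\widetilde P$, show the ordinary and $L^2$-signatures of $W$ vanish, and invoke the identification $\rho(M_J,\mathrm{ab})=\int_{S^1}\sigma_\omega(J)\,d\omega$---is indeed the shape of the argument in Cochran--Harvey--Leidy, and you correctly flag that the handle calculus identifying the second boundary component and the vanishing of the two signatures are the points requiring care. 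One small caution: the boundary of $B^4\setminus\nu(\widetilde\Sigma)$ is not literally $M_K$ but rather $(S^3\setminus\nu K)\cup_{T^2}(\Sigma\times S^1)$, so the cobordism has to be set up a bit more carefully (either by first capping off to produce $M_K$ or by working with a slightly different model, as CHL do); but since you explicitly defer the details to the cited reference, this is consistent with how the paper itself treats the result.
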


Determining $\dg(K)$ involves computing the genus of two curves from each genus one Seifert surface, of which there may be many.  Examples of knots that have an arbitrary number of non-isotopic Seifert surfaces are known~\cite[p. 47]{sS91}.  Yet we have the following remarkable fact: if just one of the first-order $L^2$-signatures is large, then the differential genus must be large.

\begin{prop}\label{lowerbound}
	Let $K$ be a genus one, algebraically slice knot with non-trivial Alexander polynomial.  Let $\rho_{1}$ and $\rho_{2}$ denote the first-order $L^2$-signatures of $K$ with respect two the two Lagrangians $P_{1}$ and $P_{2}$.  Then $2\,\dg(K)\geq \max\{|\rho_{1}|,|\rho_{2}|\}$.
\end{prop}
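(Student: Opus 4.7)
The plan is to show that for every genus one Seifert surface $\Sigma \in \G(K)$, the two derivatives of $\Sigma$ realize the two first-order $L^2$-signatures via Proposition~\ref{keystep}, and then to bound each $\rho_0$ by twice the genus of the derivative. Taking a minimum over $\Sigma$ then produces the desired inequality.

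Fix $\Sigma$ and let $\mathfrak{m}_1,\mathfrak{m}_2$ be its two metabolizers, with derivatives $\a_1,\a_2$. By Proposition~\ref{CHLprop}, each Lagrangian $P_j$ is represented by some metabolizer of $\Sigma$. Because a single metabolizer is a rank-one $\Z$-submodule of $H_1(\Sigma;\Z)$, its image in $\mathcal{A}_0(K)$ spans at most one Lagrangian; since $\Delta_K(t)\neq 1$ the two Lagrangians $P_1,P_2$ are distinct, so the map sending a metabolizer to the Lagrangian it represents is a bijection between two two-element sets. After relabeling, $\mathfrak{m}_i$ represents $P_i$, and Proposition~\ref{keystep} gives $\rho_i = \rho_0(\a_i)$ for $i=1,2$.

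Next I would invoke the classical Levine--Tristram bound $|\sigma_\omega(J)|\leq 2g(J)$, valid for every $\omega\in S^1$ and every knot $J\subset S^3$. Integrating over $S^1$ produces $|\rho_0(J)|\leq 2g(J)$. Applying this to each $\a_i$ and using the obvious inequality $g(\a_i)\leq g^K(\a_i)$ (a Seifert surface for $\a_i$ in $S^3-K$ is, in particular, a Seifert surface in $S^3$) we get
\[
|\rho_i| \;=\; |\rho_0(\a_i)| \;\leq\; 2\,g(\a_i) \;\leq\; 2\,g^K(\a_i).
\]
Hence $\max\{|\rho_1|,|\rho_2|\}\leq 2\max\{g^K(\a_1),g^K(\a_2)\}$ for every choice of $\Sigma$.

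Taking the minimum of the right-hand side over $\Sigma\in\G(K)$ yields $\max\{|\rho_1|,|\rho_2|\}\leq 2\,\dg(K)$, which is the claim. The only point requiring attention is the bijection between metabolizers of $\Sigma$ and Lagrangians of $\mathcal{A}_0(K)$, but this is forced by Proposition~\ref{CHLprop} together with a simple counting argument, so I do not anticipate any serious obstacle; the rest of the proof is a packaging of Proposition~\ref{keystep} with the standard $|\rho_0|\leq 2g$ inequality.
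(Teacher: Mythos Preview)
Your proof is correct and follows essentially the same route as the paper's: identify each $\rho_i$ with $\rho_0$ of a derivative via Proposition~\ref{keystep}, bound $|\rho_0(J)|$ by $2g(J)$ using the Levine--Tristram signature bound, and then use $g(J)\le g^K(J)$. The only organizational difference is that the paper picks the minimizing surface $\Sigma$ at the outset and writes a single chain of inequalities, whereas you prove the bound for every $\Sigma$ and then minimize; you are also more explicit about why the two metabolizers of $\Sigma$ biject with the two Lagrangians, a point the paper takes for granted.
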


\begin{proof}
	Let $\Sigma$ be the Seifert surface where the minimum is attained.  For either derivative $J_{i}\subset \Sigma$, where $J_{i}$ represents the Lagrangian $P_{i}$, we have $$2\,\dg(K)\geq 2\,g^{K}(J_{i})\geq 2\,g(J_{i})\geq\left|\int_{S^{1}}\sigma_{\omega}(J_{i})\,d \omega\right|\geq|\rho_{i}|$$
\end{proof}

	\bibliographystyle{amsalpha}
	\bibliography{hfkbib}
\end{document}